\newcommand{\R}{\mathbb R}  
\newcommand{\Z}{\mathbb Z}
\newcommand{\N}{\mathbb N}
\newtheoremstyle{theoremstyle}{}{}{\itshape}{}{\scshape}{.}{ }{\textbf{#1\ #2}}  
\theoremstyle{theoremstyle}
\newtheorem{theorem}{Theorem}
\newtheorem{prop}{Proposition}
\newtheorem{conj}{Conjecture}
\begin{document}
\title{\textbf{Coloring the Real Line with Monochromatic Intervals}}
\author{\LARGE{Doyon Kim}
\\ \large{\textnormal{Auburn University}}
\\ \large\textnormal{dzk0028@auburn.edu}}

\maketitle 
\abstract {Suppose \(D\subseteq(0,\infty)\) and \(0<|D|<\infty\). The distance graph \(G(\R,D)\) is the graph with vertex set \(\R\), and two vertices \(x,y\) are adjacent if \(|x-y|\in D\). We prove that for every positive integer \(t>1\) there is a distance set \(D\) such that the chromatic number of \(G(\R,D)\) is \(t\) and no proper coloring of \(G(\R,D)\) with \(t\) colors allows monochromatic intervals. This result disproves a conjecture in [2].}
\section{Introduction}
\ \ \ \ Suppose that \(D\subseteq(0,\infty)\) and \(0<|D|<\infty\). What is the smallest number of colors needed to color \(\R\), so that the distances in \(D\) are \textit{forbidden}, meaning that if \(|x-y|\in D\) then \(x\) and \(y\) have different colors? This is a graph coloring problem. Let \(G(\R,D)\) denote the graph with vertex set \(\R\), such that any two vertices \(x,y\) are adjacent if and only if \(|x-y|\in D\). A coloring of the graph is \textit{proper} if no two adjacent vertices have the same color. The chromatic number of \(G(\R,D)\), denoted by \(\chi(G(\R,D))\), is the smallest number of colors needed to color \(\R\) properly. The problems of finding chromatic number of \(G(\R,D)\) with a given distance set \(D\) were introduced by Eggleton et al. [1].\par
Here, we impose one more condition. Given \(D\subseteq(0,\infty)\) and \(0<|D|<\infty\), what is the smallest number of colors needed to color \(\R\) \textit{with monochromatic intervals}, so that the distances in \(D\) are forbidden? A coloring \(\varphi:\R\to\{C_0,\dots,C_{t-1}\}\) is a \textit{slab} coloring if and only if for each \(j\in\{0,\dots,t-1\}\), \(\varphi^{-1}(C_j)=\{x\in\R\mid \varphi(x)=C_j\}\) is a union of intervals. So the question is: Given a finite distance set \(D\), what is the smallest number \(t\) such that there is a proper slab coloring \(\varphi:\R\to \{C_0,\dots,C_{t-1}\}\)? \par
\section{Coloring \(\R\) with monochromatic intervals} 
\ \ \ \ Let \(\chi_m(G(\R,D))\) denote the smallest number of colors needed to color \(\R\) with monochromatic intervals so that the distances in \(D\) are forbidden. Obviously, \(\chi_m(G(\R,D))\geq \chi(G(\R,D))\).
It is not difficult to prove that if the elements of \(D\) are commensurable then \(\chi_m(G(\R,D))=\chi(G(\R,D))\). Suppose that the elements of \(D\) are commensurable. Then there exists a positive number \(\alpha\) such that \(D'=\{\alpha d|d\in D\}\) is a set of positive integers. \(G(\R,D')\) is isomorphic to \(G(\R,D)\) so \(\chi(G(\R,D'))=\chi(G(\R,D))\). \par 
Also, \(\chi(G(\R,D'))=\chi(G(\Z,D'))\). To see this, first note that \(\Z\subseteq \R\) implies that \(\chi(G(\Z,D'))\leq \chi(G(\R,D'))\). On the other hand, let \(\varphi\) be a proper coloring of \(G(\Z,D')\) with \(\chi(G(\Z,D'))\) colors. Color \(\R\) by assigning to each interval \([n,n+1)\) the color \(\varphi(n)\). This coloring clearly forbids the integer distances in \(D'\). Therefore, \(\chi(G(\R,D'))\leq\chi(G(\Z,D'))\). \par
The proper coloring of \(G(\R,D')\) suggested above is a slab coloring, and multiplication of \(\R\) by \(\frac{1}{\alpha}\), which is an isomorphism from \(G(\R,D')\) to \(G(\R,D)\), carries this coloring to a proper slab coloring of \(G(\R,D)\). Therefore,
\begin{align*}
\chi_m(G(\R,D))&\leq \chi(G(\Z,D'))=\chi(G(\R,D')) \\ &=\chi(G(\R,D))\leq\chi_m(G(\R,D)),
\end{align*}
so \(\chi(G(\R,D))=\chi_m(G(\R,D))\) if the elements of \(D\) are commensurable. \par
Now, suppose the elements of \(D\) is not commensurable. In this case, the problem of determining \(\chi_m(G(\R,D))\) is still open. Among other noticeable results, Anderson et al. [2] proved that if \(D=\{d_1,\dots,d_k\}\) and \(0<d_1<\cdots<d_k\) then \(\chi_m(G(\R,D))\leq \min(k+1,\lceil\frac{d_k}{d_1}\rceil+1)\). At the end of the paper [2], they included the following conjecture: 
\begin{conj}
If \(D\subseteq(0,\infty)\) and \(0<|D|<\infty\), then \(\chi_m(G(\R,D))>\chi(G(\R,D))\) if and only if the elements of \(D\) are incommensurable and \(\chi(G(\R,D))=2\).
\end{conj}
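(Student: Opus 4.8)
The plan is to refute the conjecture by proving the stronger statement announced in the abstract: for every integer \(t>1\) there is an incommensurable distance set with \(\chi=t\) and \(\chi_m>t\), which violates the forward (``only if'') implication whenever \(t\ge 3\). Concretely, I would take
\[ D_t=\{1,2,\dots,t-1\}\cup\{s,2s,\dots,(t-1)s\}, \]
where \(s\) is irrational (say \(s=\sqrt2\)); then \(1\) and \(s\) are incommensurable, so \(D_t\) is incommensurable, and for \(t=2\) this degenerates to the familiar pair \(\{1,s\}\).

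The engine of the argument is a rigidity observation valid for \emph{every} proper \(t\)-coloring \(\varphi\), slab or not. The \(t\) points \(x,x+1,\dots,x+(t-1)\) are pairwise at distances in \(\{1,\dots,t-1\}\subseteq D_t\), so they form a copy of \(K_t\) and must use all \(t\) colors; comparing the rainbow windows starting at \(x\) and at \(x+1\) (they share \(x+1,\dots,x+t-1\)) forces \(\varphi(x+t)=\varphi(x)\), so \(\varphi\) is \(t\)-periodic. Running the identical argument on the progression \(\{s,2s,\dots,(t-1)s\}\) shows \(\varphi\) is also \(ts\)-periodic. Hence every \(t\)-coloring is invariant under \(G=t\Z+ts\Z\), which is dense in \(\R\) because \(s\) is irrational.

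This dense invariance is what separates \(\chi\) from \(\chi_m\). For \(\chi_m>t\): a slab coloring is a union of intervals and therefore Lebesgue measurable, and a measurable function invariant under a dense subgroup is constant almost everywhere (ergodicity of the dense translation action, via the Lebesgue density theorem). But if a slab coloring were a.e. one color, its other color classes would be null unions of intervals, hence empty, leaving all of \(\R\) one color and forbidding no distance — impossible since \(D_t\neq\emptyset\). So no slab \(t\)-coloring exists. For the chromatic number, the clique \(\{0,1,\dots,t-1\}\) gives \(\chi\ge t\), while for \(\chi\le t\) I would exhibit a (necessarily non-measurable) \(t\)-coloring: in the quotient \(Q=\R/G\) every forbidden difference lands in the finite subgroup \(H=\langle\bar1,\bar s\rangle\cong(\Z/t)^2\), so one may color each coset of \(H\) independently, and the Latin-square rule \(a\bar1+b\bar s\mapsto(a+b)\bmod t\) is proper on \((\Z/t)^2\); choosing coset representatives by the axiom of choice extends it to a proper \(t\)-coloring of \(\R\). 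Thus \(\chi(G(\R,D_t))=t\).

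The main obstacle is precisely the coexistence the conjecture declared impossible for \(t>2\): the same dense invariance must annihilate every \emph{measurable} coloring yet leave a \emph{non-measurable} \(t\)-coloring intact. Making this airtight requires (i) confirming the ergodicity step for arbitrary unions of intervals — here I would note that any union of intervals differs from an open set by a countable (hence null) set of endpoints, so measurability is automatic — and (ii) checking that the coset coloring is genuinely proper, i.e. that each forbidden difference maps to a nonzero multiple of \(\bar1\) or of \(\bar s\), so its Latin-square color-shift is nonzero. Since these counterexamples satisfy \(\chi=t\) yet \(\chi_m>\chi\), the case \(t\ge 3\) contradicts the ``only if'' direction of the conjecture, disproving it.
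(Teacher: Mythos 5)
Your proposal is correct, and it disproves the conjecture by a genuinely different route than the paper. Two differences stand out. First, your distance set \(\{1,\dots,t-1\}\cup\{s,2s,\dots,(t-1)s\}\) omits the mixed distances that the paper's set \(\{a+b\sqrt 2\mid a,b\in\N,\ 1\le a+b\le t-1\}\) includes. The paper needs those mixed distances because its key lemma is a uniqueness statement: overlapping cliques such as \(\{a-t+2,\dots,a,a+\sqrt 2\}\) force every proper \(t\)-coloring, restricted to \(\Z[D]\), to agree up to color names with the canonical \((a+b)\bmod t\) coloring, whose color classes are dense in \(\R\); density of those classes then rules out monochromatic intervals with no measure theory at all. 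With your leaner set that uniqueness actually fails (for \(t=3\), both \((a+b)\bmod 3\) and \((a+2b)\bmod 3\) are proper on \(\Z+s\Z\)), but your argument never needs it: your key lemma is the weaker-but-sufficient statement that every proper \(t\)-coloring of \(\R\) is invariant under the dense group \(t\Z+ts\Z\), obtained from the pure distances alone by the sliding rainbow-window argument. Second, you finish with measurability of slab classes plus ergodicity of dense translations, whereas the paper stays purely combinatorial and topological; your route buys a smaller distance set (\(2(t-1)\) elements versus order \(t^2\)) and a symmetry statement valid for all proper \(t\)-colorings, while the paper's buys freedom from the Lebesgue density theorem and the stronger rigidity conclusion. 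One further remark: your own invariance lemma makes the measure theory dispensable. If some proper \(t\)-coloring \(\varphi\) had a monochromatic nondegenerate interval \(I\) of color \(C\), then for any \(x\in\R\) the dense group \(G=t\Z+ts\Z\) meets the nondegenerate interval \(x-I\), so \(x-g\in I\) for some \(g\in G\), and invariance gives \(\varphi(x)=\varphi(x-g)=C\); hence \(\R\) would be monochromatic, contradicting properness since \(D\neq\emptyset\). That substitution makes your proof as elementary as the paper's while retaining your simpler distance set.
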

This turns out to be not true. We will present a counterexample in section 3, and end the paper with a proof of the following theorem, an extension of the counterexample:
\begin{theorem}
For every positive integer \(t\geq 2\), there exists a distance set \(D\) with \(D\subseteq(0,\infty)\) and \(0<|D|<\infty\) such that \(\chi_m(G(\R,D))>\chi(G(\R,D))=t\).
\end{theorem}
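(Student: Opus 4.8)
The plan is to produce one explicit distance set and verify the two halves $\chi(G(\R,D))=t$ and $\chi_m(G(\R,D))>t$ separately. Fixing an irrational number $\lambda>0$, I would take
\[
D=\{1,2,\dots,t-1\}\cup\{\lambda,2\lambda,\dots,(t-1)\lambda\},
\]
a finite set of $2(t-1)$ distinct positive reals. For $t=2$ this is just $\{1,\lambda\}$, so the argument literally extends the section 3 counterexample. The guiding intuition is that the integer block $\{1,\dots,t-1\}$ pins the chromatic number at $t$ through a clique, while $\{\lambda,\dots,(t-1)\lambda\}$ is an incommensurable ``copy'' of it that imposes no genuinely new constraint on ordinary colorings but destroys every slab coloring.

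The heart of the slab obstruction is a rigidity observation. For any $x\in\R$ the points $x,x+1,\dots,x+(t-1)$ are pairwise at a distance in $\{1,\dots,t-1\}$, hence form a clique; so do $x+1,\dots,x+t$. In any proper $t$-coloring $\varphi$ each clique must use all $t$ colors, and since the two cliques share $t-1$ points, the missing color of each is the same, forcing $\varphi(x)=\varphi(x+t)$ for every $x$. Running the identical argument on the scaled clique $x,x+\lambda,\dots,x+(t-1)\lambda$ gives $\varphi(x)=\varphi(x+t\lambda)$ for every $x$. Thus every proper $t$-coloring is invariant under translation by each element of the group $\Gamma=t\Z+t\lambda\Z$, which is dense in $\R$ because $\lambda$ is irrational. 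If $\varphi$ were a slab coloring, some color class would contain a nondegenerate interval $I$; by $\Gamma$-invariance $\varphi$ would be constant on every translate $I+\gamma$ with $\gamma\in\Gamma$, and dense translates of $I$ cover $\R$, so $\varphi$ would be constant, contradicting $\varphi(0)\neq\varphi(1)$. Hence no slab $t$-coloring exists and $\chi_m(G(\R,D))>t$.

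Next I would show $\chi(G(\R,D))=t$. The clique $\{0,1,\dots,t-1\}$ already gives $\chi\ge t$. For the matching upper bound I would use that every element of $D$ lies in $H=\Z+\lambda\Z$, so adjacent points always lie in the same coset of $H$ and the graph splits as a disjoint union over cosets. Because $1$ and $\lambda$ are incommensurable, each coset $x_0+H$ is in bijection with $\Z^2$ via $(m,n)\mapsto x_0+m+n\lambda$, and under this bijection adjacency becomes: same row with horizontal gap in $\{1,\dots,t-1\}$, or same column with vertical gap in $\{1,\dots,t-1\}$. The coloring $(m,n)\mapsto(m+n)\bmod t$ is proper for this grid graph, so coloring each coset independently (this uses choice, and the resulting coloring need not be measurable) yields a proper $t$-coloring of $G(\R,D)$. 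Therefore $\chi(G(\R,D))=t$.

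The main obstacle is the balance forced on the choice of $D$: the extra distances must be rigid enough to force invariance under a dense group, which is what kills every slab coloring, yet independent enough not to raise the chromatic number above $t$. The coset decomposition in the last step is precisely what certifies that the incommensurable block contributes no new chromatic obstruction, and arranging a single set to do both jobs at once is the crux; the two clique-rigidity computations and the density argument that closes the slab case are then routine.
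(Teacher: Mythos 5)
Your proposal is correct, but its key step is genuinely different from the paper's. The paper takes \(D=\{a+b\sqrt 2\mid a,b\in\N,\ 1\leq a+b\leq t-1\}\), which includes the mixed distances \(a+b\sqrt 2\) with \(a,b\geq 1\); those mixed distances are exactly what it uses to prove that every proper \(t\)-coloring of \(G(\R,D)\), restricted to \(\Z[D]=\Z+\sqrt 2\,\Z\), is \emph{unique} up to color names (first the integers are forced, then each \(a+j\sqrt 2\) via cliques containing both integer points and \(\sqrt 2\)-translates), after which density of the color classes of that unique coloring finishes the argument. Your set \(\{1,\dots,t-1\}\cup\{\lambda,\dots,(t-1)\lambda\}\) omits the mixed distances, and for it uniqueness actually fails when \(t\geq 3\): on \(\Z+\lambda\Z\) the coloring \(m+n\lambda\mapsto L(m\bmod t,\,n\bmod t)\) is proper for \emph{any} Latin square \(L\) of order \(t\) (e.g.\ \((2m+n)\bmod 3\) for \(t=3\)), so the paper's rigidity argument cannot be run verbatim on your set. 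What saves your proof is that you establish a weaker but sufficient rigidity: the overlapping-clique trick gives \(\varphi(x)=\varphi(x+t)\) and \(\varphi(x)=\varphi(x+t\lambda)\) for every \(x\in\R\), hence invariance of every proper \(t\)-coloring under the dense group \(t\Z+t\lambda\Z\), and a monochromatic interval would then propagate to make \(\varphi\) constant, contradicting \(\varphi(0)\neq\varphi(1)\). Your route buys a leaner distance set (\(2(t-1)\) elements versus order \(t^2\)) and an argument that handles all proper \(t\)-colorings simultaneously without ever computing one; the paper's route buys a stronger structural conclusion (complete determination of the coloring on \(\Z[D]\)), which directly generalizes its section 3 counterexample. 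The chromatic-number half is essentially identical in the two proofs: the lower bound from the clique \(\{0,\dots,t-1\}\), and the upper bound from the mod-\(t\) (grid) coloring of the subgroup \(\Z+\lambda\Z\) copied onto each coset via the Axiom of Choice, exactly as in the paper's Proposition 1. One cosmetic slip: your \(t=2\) set \(\{1,\lambda\}\) matches the theorem's construction at \(t=2\), not the section 3 counterexample, which is the \(t=3\) case with the mixed distance \(1+\sqrt 2\) included; this does not affect the mathematics.
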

Throughout the presentation of a counterexample and the proof of the theorem, we allow the Axiom of Choice. Allowing the Axiom of Choice, the following proposition, introduced and proved in [2], will be used to verify the counterexample and prove the theorem:
\begin{prop}
For \(D=\{d_1,\dots,d_k\}\subset (0,\infty)\), let \(\Z[D]=\{a_1d_1+\cdots+a_kd_k\mid a_1,\dots,a_k\in\Z\}\). Then \(\chi(G(\R,D))=\chi(G(\Z[D],D))\).
\end{prop}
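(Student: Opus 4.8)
The plan is to establish the two inequalities \(\chi(G(\Z[D],D))\le\chi(G(\R,D))\) and \(\chi(G(\R,D))\le\chi(G(\Z[D],D))\) separately. The first is immediate: since \(\Z[D]\subseteq\R\), the graph \(G(\Z[D],D)\) is the subgraph of \(G(\R,D)\) induced on \(\Z[D]\), so restricting any proper coloring of \(G(\R,D)\) to \(\Z[D]\) yields a proper coloring of \(G(\Z[D],D)\). Hence \(\chi(G(\Z[D],D))\le\chi(G(\R,D))\).

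The reverse inequality is the substantive part, and the key observation I would isolate first is that every edge of \(G(\R,D)\) joins two points lying in the same coset of the subgroup \(\Z[D]\) of \((\R,+)\). Indeed, if \(|x-y|\in D\) then \(x-y\in D\cup(-D)\subseteq\Z[D]\), since each \(d_i\) and \(-d_i\) is an integer combination of the \(d_i\); thus \(x+\Z[D]=y+\Z[D]\). Consequently \(G(\R,D)\) has no edge between distinct cosets, and each coset, equipped with the distance set \(D\), is carried onto \(G(\Z[D],D)\) by the translation map \(z\mapsto z-r\), which preserves every distance \(|x-y|\) and hence preserves adjacency. So \(G(\R,D)\) is a disjoint union of translated copies of \(G(\Z[D],D)\).

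To convert this structural fact into the desired bound, I would set \(n=\chi(G(\Z[D],D))\), fix an optimal proper coloring \(\psi:\Z[D]\to\{C_0,\dots,C_{n-1}\}\), and, invoking the Axiom of Choice, select a representative \(r\) of each coset of \(\Z[D]\) in \(\R\). Every \(x\in\R\) then has a unique expression \(x=r+g\) with \(g\in\Z[D]\), and I define \(\varphi(x)=\psi(g)\). To check properness, suppose \(|x-y|\in D\); by the observation above \(x\) and \(y\) share a representative \(r\), say \(x=r+g\) and \(y=r+g'\), whence \(|g-g'|=|x-y|\in D\). Thus \(g,g'\) are adjacent in \(G(\Z[D],D)\), so \(\psi(g)\ne\psi(g')\) and therefore \(\varphi(x)\ne\varphi(y)\). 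This exhibits a proper coloring of \(G(\R,D)\) with \(n\) colors, giving \(\chi(G(\R,D))\le n\), and combining the two inequalities yields the claimed equality.

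The only delicate point is the use of the Axiom of Choice to pick coset representatives simultaneously across the uncountably many cosets of \(\Z[D]\); this is precisely where the standing hypothesis that Choice is available does its work. Beyond that, I expect the argument to be structural rather than computational: once one recognizes that the finitely many distances in \(D\) all lie in \(\Z[D]\) and therefore cannot create edges across cosets, the decomposition of \(G(\R,D)\) into translated copies of \(G(\Z[D],D)\) makes the equality of chromatic numbers almost automatic, and the verification of properness of \(\varphi\) is a one-line check.
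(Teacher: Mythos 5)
Your proof is correct and follows essentially the same route as the paper: both establish the easy inequality via the induced subgraph, then prove \(\chi(G(\R,D))\le\chi(G(\Z[D],D))\) by decomposing \(\R\) into cosets of \(\Z[D]\), choosing representatives (via the Axiom of Choice), and copying the optimal coloring of \(\Z[D]\) onto each coset. Your write-up is somewhat more explicit than the paper's in justifying that no edge of \(G(\R,D)\) crosses between cosets (since \(D\subseteq\Z[D]\)), but this is the same argument, just with the details filled in.
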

\begin{proof}
\(\Z[D]\) is the additive subgroup of \(\R\) generated by the elements of \(D\). Clearly, \(\chi(G(\Z[D],D))\leq\chi(G(\R,D))\). Let \(\Z[D]\) be colored with \(\chi(G(\Z[D],D))\) colors properly, and then color all of \(\R\) with that many colors by copying the coloring of \(\Z[D]\) on each coset of \(\Z[D]\) in \(\R\), by choosing a representative \(r\) of the coset and then coloring each element \(r+b\in r+\Z[D]\) by the color of \(b\) in \(\Z[D]\). Since \(\R\) is a disjoint union of the cosets of \(\Z[D]\) in \(\R\) and each distance in \(D\) is forbidden within each coset, we have a proper coloring of \(\R\) with \(\chi(G(\Z[D],D))\) colors. Therefore, \(\chi(G(\R,D))=\chi(G(\Z[D],D))\).
\end{proof}
\section{Counterexample to the conjecture}
\ \ \ \ Let \(D=\{1,2,\sqrt 2,2\sqrt 2,1+\sqrt 2\}\). Let \(G=G(\R,D)\), and \(G(\Z[D])=G(\Z[D],D))\). Here, \(\Z[D]=\{a+b\sqrt 2\mid a,b\in\Z\}\). By the proposition, we have \(\chi(G)=\chi(G(\Z[D])\). Since, in any proper coloring of \(G(\Z[D])\), the colors of \(0,1,2\) are different, \(\chi(G(\Z[D])\geq 3\). For \(a,b\in\Z\), color \(a+b\sqrt 2\) red if \(a+b\equiv 0\pmod 3\), green if \(a+b\equiv 1\pmod 3\), blue if \(a+b\equiv 2\pmod 3\). This is a proper coloring of \(G(\Z[D])\) with \(3\) colors, so we conclude by Proposition 1 that \(\chi(G)=\chi(G(\Z[D])=3\). It is well-known that \(\Z[D]\) is dense in \(\R\). Since each color set in the coloring of \(\Z[D]\) given just above contains one of \(3\Z[D]+k\), \(k=0,1,2\), it follows that each color in any coloring of \(G\) arising from this coloring of \(\Z[D]\), by Proposition 1's proof, is dense in \(\R\). Therefore, no such coloring is a slab coloring.
\par
Now, with the same distance set defined above, let \(\varphi:\R\to \{C_0,C_1,C_2\}\) be a proper three-coloring of \(G\). The set \(\{0,1,2\}\) induces \(K_3\), so \(0,1,2\) should have different colors. Without loss of generality, we can assume that \(\varphi (0)=C_0\), \(\varphi (1)=C_1\) and \(\varphi (2)=C_2\). Then the color of every integer is uniquely determined, since for every \(a\in\Z\), \(\{a,a+1,a+2\}\) induces \(K_3\). Now, take an arbitrary \(a\in \Z\). Since \(\{a-1,a,a+\sqrt 2\}\) induces \(K_3\), the color of \(a+\sqrt 2\) is uniquely determined. With the color of \(a,a+\sqrt 2\) uniquely determined, the color of every \(a+b\sqrt 2\), \(b\in\Z\) is uniquely determined. Since the choice of \(a\in\Z\) is arbitrary, the color of every \(a+b\sqrt 2\in\Z[D]\) is uniquely determined. Therefore, in this proper 3-coloring of \(G\), the color of every point in \(\Z[D]\) is uniquely determined, except for the color names. So we can regard the coloring of \(\Z[D]\) in \(\R\) with red, green, blue colors explained above as the unique proper 3-coloring of \(\Z[D]\), and conclude that there is no slab coloring of \(\R\) with three colors, because each color in any proper coloring of \(G(\Z[D])\) with \(3\) colors is dense in \(\Z[D]\), and \(\Z[D]\) itself is dense in \(\R\). Therefore \(\chi_m(G)>3\). \par
This suffices to disprove the conjecture, but we can easily show that \(\chi_m(G)=4\). Let \(C=\{C_0,C_1,C_2,C_3\}\) be a color set. For every \(n\in\Z\), color \([n,n+1)\) with \(C_i\), for \(n\equiv i\pmod 4\). This is a slab coloring that forbids every distance in \(D\). (Note that \(\lceil \frac{2\sqrt 2}{1}\rceil+1=4\). The slab coloring with \(4\) colors given here is illustrative of part of the proof in [2] of the general upper bound on \(\chi_m(G(\R,D))\) mentioned earlier.)
\section{Proof of the Theorem}
\begin{proof}[\unskip\nopunct]
\ \ \ \ Let \(\N\) denote the set of non-negative integers. For an arbitrary \(t\in \N\cap [2,\infty)\), let the distance set be \(D=\{a+b\sqrt 2\mid a,b\in\N, 1\leq a+b\leq t-1\}\). Clearly, \(D\subseteq (0,\infty)\) and \(0<|D|<\infty\). Let \(G=G(\R,D)\) and \(G(\Z[D])=G((\Z[D],D))\). Here, \(\Z[D]=\{a+b\sqrt 2\mid a,b\in\Z\}\). Since \((0,\dots,t-1)\) forms \(K_t\) in \(G\), \(\chi(G)\geq t\). Let \(C=\{C_0,\dots,C_{t-1}\}\), and color each \(a+b\sqrt 2\in \Z[D]\) with \(C_i\), for \(a+b\equiv i \pmod t\). This is a proper coloring of \(\Z[D]\) with \(t\) colors. Therefore \(\chi(G)=\chi(G(\Z[D]))=t\). Since all of the \(C_0\), \dots, \(C_{t-1}\) points in \(\Z[D]\) are dense in \(\R\), because \(\Z[D]\) is and each color class in \(\Z[D]\) contains a set \(t\Z[D]+k\), \(k\in\{0,\dots,t-1\}\), every open interval in \(\R\) contains all of the \(t\) colors. Therefore this t-coloring is not a slab coloring. \par
Now, suppose \(\varphi:\R\to \{C'_0,\dots,C'_{t-1}\}\) is a proper t-coloring of \(G\) with \(\varphi(i)=C'_i\) for \(i\in\{0,\dots,t-1\}\). Then the color of every integer is uniquely determined, since for every \(a\in\Z\), \(\{a,\dots,a+t-1\}\) induces \(K_t\). Take an arbitrary \(a\in\Z\); \(\{a-t+2,\dots,a,a+\sqrt 2\}\) induces \(K_t\), so the color of \(a+\sqrt 2\) is uniquely determined. Actually, for every \(j\in\{1,\dots,t-1\}\), \(\{a-t+j+1,\dots,a,a+\sqrt 2,\dots,a+j\sqrt 2\}\) induces \(K_t\), so the color of \(a+j\sqrt 2\) is uniquely determined (you can determine the color of \(a+\sqrt 2\) first, and then \(a+2\sqrt 2\), and so on). With the color of each element of \(\{a,a+\sqrt 2,\dots,a+(t-1)\sqrt 2)\}\) uniquely determined, the color of every \(a+b\sqrt 2\), \(b\in\Z\) is uniquely determined. Since the choice of \(a\in\Z\) was arbitrary, the color of every \(a+b\sqrt 2\in\Z[D]\) is uniquely determined, and it is straightforward to see that, except for the color names, the coloring \(\varphi\) on \(\Z[D]\) is the same as the coloring of \(\Z[D]\) given in the first paragraph of this proof. Therefore the color sets on \(\Z[D]\) are dense in \(\R\). Therefore there are no monochromatic intervals in this coloring, so it is certainly not a slab coloring. Therefore \(\chi_m(G)>\chi(G)=t\). This completes the proof. \\
\end{proof}

\section*{Acknowledgment}
\ \ \ \ This work was supported by NSF grant no. 1262930, and was completed during and after the 2015 summer Research Experience for Undergraduates in Algebra and Discrete Mathematics at Auburn University.

\end{document}